\newcommand{\ex}[1]{\mathbb{E}\left[ #1 \right] }
\newcommand{\norm}[1]{\left\lVert #1 \right\rVert}
\newcommand{\norms}[1]{\lVert #1 \rVert}
\newcommand{\Q}{ {\mathbf Q}}
\newcommand{\A}{ {\mathbf A}}
\newcommand{\s}{ {\mathbf S}}
\newcommand{\U}{ {\mathbf U}}
\newcommand{\inner}[2]{\langle #1, #2 \rangle}
\newcommand{\lep}[1]{\mathop  \le \limits^{(#1)}}
\newcommand{\ep}[1]{\mathop  = \limits^{(#1)}}
\newtheorem{theorem}{Theorem}
\newtheorem{lemma}{Lemma}
\newtheorem{definition}{Definition}
\newtheorem{remark}{Remark}
\begin{document}
%
\title{A Note on Load Balancing in Many-Server Heavy-Traffic Regime }
\author{Xingyu Zhou\\Department of ECE\\The Ohio State University\\\texttt{zhou.2055@osu.edu}
\and Ness Shroff\\Department of ECE and CSE\\The Ohio State University\\
\texttt{shroff.11@osu.edu} } 
\date{}

\maketitle



\begin{abstract}
  In this note, we apply Stein's method to analyze the performance of general load balancing schemes in the many-server heavy-traffic regime. In particular, consider a load balancing system of $N$ servers and the distance of arrival rate to the capacity region is given by $N^{1-\alpha}$ with $\alpha > 1$. We are interested in the performance as $N$ goes to infinity under a large class of policies. We establish different asymptotics under different scalings and conditions. Specifically, (i) If the second moments linearly increase with $N$ with coefficients $\sigma_a^2$ and $\nu_s^2$, then for any $\alpha > 4$, the distribution of the sum queue length scaled by $N^{-\alpha}$ converges to an exponential random variable with mean $\frac{\sigma_a^2 + \nu_s^2}{2}$. (3) If the second moments quadratically increase with $N$ with coefficients $\tilde{\sigma}_a^2$ and $\tilde{\nu}_s^2$, then for any $\alpha > 3$, the distribution of the sum queue length scaled by $N^{-\alpha-1}$ converges to an exponential random variable with mean $\frac{\tilde{\sigma}_a^2 + \tilde{\nu}_s^2}{2}$. Both results are simple applications of our previously developed framework of Stein's method for heavy-traffic analysis in~\cite{zhou2020note}.
\end{abstract}

%


\section{Introduction}
Load balancing has attracted increasing attention recently due to its application in cloud computing and data centers. In this note, we consider a system consisting of one load balancer and $N$ servers each with an infinite buffer queue. The arrival is immediately dispatched to one of the servers based on a certain load balancing policy. In particular, we consider a set of systems where the distance of arrival rate to the capacity is given by $N^{1-\alpha}$ with $\alpha > 1$ and let $N$ go to infinity, which is often called the many-server heavy-traffic regime.

Many previous works have investigated the system performance under different values of $\alpha$. For example, if $\alpha = \frac{1}{2}$, i.e., Halfin-Whitt regime, Join-Shortest-Queue (JSQ) has been extensively studied~\cite{banerjee2019join,braverman2020steady,eschenfeldt2018join}. In the Sub-Halfin-Whitt regime where $\alpha \in (0,\frac{1}{2})$, several load balancing policies are investigated~\cite{liu2019simple}. Recently, the authors also extend the analysis to the case when $\alpha \in (\frac{1}{2}, 1)$~\cite{liu2019universal}. In~\cite{gupta2019load}, load balancing policies in Nondegenerate Slowdown regime (NDS) (i.e., $\alpha = 1$) are studied. 	
More recently, \cite{hurtado2020load} studied JSQ and they show that the total queue length scaled by $N^{-\alpha}$ converges to an exponential random variable via transform method and Stein's method. 

In this paper, instead of only focusing on JSQ policy under one particular scaling situation as in~\cite{hurtado2020load}, we investigate a large class of load balancing policies and establish their asymptotic performance for different values of $\alpha$ under different scalings. This is possible because we adopt the framework of Stein's method for heavy-traffic analysis developed in our early work~\cite{zhou2020note} for general load balancing and scheduling problems. In details, we have made the following key contributions.

First, we present the asymptotic performance for a large class of load balancing schemes. For any policy in this class, we show that the asymptotic performance depends on the scaling properties of the second moments of total arrival and service processes, i.e., $\sigma_{\Sigma}^{(N)}$ and $\nu_{\Sigma}^{(N)}$. In particular, if $\sigma_{\Sigma}^{(N)} = N \sigma_s^2$ and $\nu_{\Sigma}^{(N)} = N \nu_s^2$, then for any $\alpha > 4$, the distribution of the sum queue length scaled by $N^{-\alpha}$ converges to an exponential random variable with mean $\frac{\sigma_a^2 + \nu_s^2}{2}$. If $\sigma_{\Sigma}^{(N)} = N^2 \tilde{\sigma}_s^2$ and $\nu_{\Sigma}^{(N)} = N^2 \tilde{\nu}_s^2$, then for any $\alpha > 3$, the distribution of the sum queue length scaled by $N^{-\alpha-1}$ converges to an exponential random variable with mean $\frac{\tilde{\sigma}_a^2 + \tilde{\nu}_s^2}{2}$. 
It is worth noting that this class not only includes policies that achieve a single-dimensional state-space collapse (e.g., JSQ, Power-of-$d$, $p$-JSQ as in~\cite{zhou2018degree}, and many others in~\cite{zhou2017designing}), but also includes all the policies under which the state-space collapse region is multi-dimensional as long as it can be covered by a cone. On one hand, this directly indicates that a single-dimensional state-space collapse is not necessary for the asymptotic performance as in~\cite{hurtado2020load}. On the other hand, it also allows us to explore the trade-off between flexibility and performance. 

Second, although Stein's method serves as the key idea behind both~\cite{hurtado2020load} and our work, the execution in our work is totally different from~\cite{hurtado2020load}. In particular, our analysis is purely based on the general framework of Stein's method developed in our early work~\cite{zhou2020note}. This framework of Stein's method for heavy-traffic analysis can be used to analyze single-server system, general load balancing problems and scheduling problems. The result in this paper is just another application of our early framework with a very simple proof. By using this framework, we are not only able to establish asymptotic performance for a large class of policies, but also obtain different asymptotics under different scalings. The simplicity and broader applicability of our framework comes from the fact that it inherits the same intuitions and mathematical bounds as in the drift-based method. As a result, we can directly plug in previously well-known bounds established by drift-method into this framework, and hence easily establish new asymptotic performance beyond first moment result (e.g., convergence in distribution) without analyzing each policy by going through all the details repeatedly. For interesting readers, please refer to~\cite{zhou2020note} for more details.



\section{System model and preliminaries}
\label{sec:model}

We consider a single-hop queueing system in the discrete time, i.e., a time-slotted system. There are $N$ separate servers, each of them maintains an infinite capacity FIFO queue. Once a task or job is in a queue, it remains in that queue until its service is completed. Each server is
assumed to be work conserving, i.e., a server is idle if and only if its corresponding queue is empty.

Let $Q_n(t)$ be the queue length (i.e., tasks in the queue and the server) of server $n$ at the beginning of time-slot $t$. Let $A_{\Sigma}(t)$ denote denote the number of exogenous tasks that arrive at the beginning of time-slot $t$.  We assume that $A_{\Sigma}(t)$ is an integer-valued random variable with mean of $\lambda_{\Sigma}$, which is i.i.d. across time-slots. We further assume that there is a positive probability for $A_{\Sigma}(t)$ to be zero. We assume that $S_n(t)$
is also an integer-valued random variable with mean $\mu_n$, which is i.i.d. across time-slots. We also assume that $S_n(t)$ is independent across different servers as well as the arrival process. Let $S_{\Sigma}(t) \triangleq \sum_{n=1}^N S_n(t)$ denote the hypothetical total service process with mean of $\mu_{\Sigma} \triangleq \sum_{n=1}^N \mu_n$. We assume that both arrival and service processes have a bounded support, i.e., $A_{\Sigma}(t) \le A_{max}$ and $S_n(t) \le S_{max}$ for all $t$. 

We consider a set of load balancing systems parameterized by $\epsilon \triangleq N^{1-\alpha}$ such that $\lambda_{\Sigma}^{(\epsilon)} = \mu_{\Sigma} - \epsilon$ and $\mu_{\Sigma} = \theta(N)$, $A_{max} = \theta(N)$\footnote{This condition is necessary since the mean total arrival rate is on the order of $N$.}. In particular, we have $\lambda_{\Sigma}^{(\epsilon)} = \ex{\overline{A}_{\Sigma}}$, $(\sigma_{\Sigma}^{(\epsilon)})^2 = \text{Var}(\overline{A}_{\Sigma})$, $\mu_{\Sigma} = \ex{\overline{S}_{\Sigma}}$ and $\nu_{\Sigma}^2 = \text{Var}({\overline{S}_{\Sigma}})$. A load balancing policy is adopted by the dispatcher to determine to which queue the new arrivals should be sent.

In each time-slot, the order of events is as follows. First, queue lengths (or partial queue lengths) are observed. Based on these observations, a control problem is solved (i.e., the load balancing problem or the scheduling problem). Then, arrivals happen and the server processes tasks at the end of each time slot. In particular, the evolution of the length of queue $n$ is given by
\begin{align}
  Q_n(t+1) = Q_n(t) + A_n(t) - S_n(t) + U_n(t),
\end{align}
where $U_n(t) = \max(S_n(t)-A_n(t)-Q_n(t),0)$ is the unused service due to an empty queue.

In this paper, we add a line on top of variables and vectors to denote steady-state (e.g., $\overline{\Q}$, $\overline{\A}$ and $\overline{\s}$). In order to perform our heavy-traffic analysis, we consider a set of systems parametrized by a positive parameter $\epsilon$ (or equivalently by $N$). In particular, the parameter $\epsilon$ captures the distance of arrival vector to a particular point on the capacity region, i.e., a smaller $\epsilon$ means a heavier load. 

\begin{definition}
  A control policy is said to be throughput optimal if for any $\epsilon>0$, the system is positive recurrent and all the moments of $\norms{\overline{\Q}^{(\epsilon)}}$ are finite. 
\end{definition}

The main convergence metric used in this paper is the Wasserstein distance metric, which is defined as follows for non-negative random variables.
\begin{align*}
    d_W(X,Y) = \sup_{h\in\text{Lip}(1)} |\ex{h(X)} - \ex{h(Y)}|
  \end{align*}
where for a metric space $(\mathcal{S},d)$, $\text{Lip}(1)=\{h: \mathcal{S}\to\mathbb{R}, |h(x)-h(y)|\le d(x,y)\}$. The class $\text{Lip}(1)$ is simple to work with but at the same time rich enough so that convergence under the Wasserstein metric implies the convergence in distribution~\cite{gibbs2002choosing}.

\section{Main Results}
\label{sec:lb}
In this section, we directly apply the framework of Stein's method for heavy-traffic analysis developed in our early work~\cite{zhou2020note} to study load balancing in many-server heavy-traffic regime. As can be seen from the proof, all we need to do is basically replace $\epsilon$ by $N^{1-\alpha}$ and plug in previous bounds obtained via drift-based method. This directly implies the simplicity and general applicability of our framework.

\begin{lemma}
\label{lemma:1}
    Consider a set of load balancing systems parameterized by $N$ such that $\epsilon = N^{1-\alpha}$, $\alpha > 1$ with $\mu_{\Sigma} = \theta(N)$ and $A_{max} = \theta(N)$. Assume that $(\sigma_{\Sigma}^{(N)})^2 = N\sigma_a^2$ and $(\nu_{\Sigma}^{(N)})^2 = N\sigma_s^2$. Suppose that the load balancing policy is throughput optimal and there exists a function $g(N)$ such that 
    \begin{align}
    \label{eq:cross1}
     \frac{1}{N}\ex{\norms{\overline{\Q}^{(N)}(t+1)}_1\norms{\overline{\U}^{(N)}}_1} = O(g(N)).
    \end{align}
    Then, we have 
    \begin{align*}
       d_W(N^{-\alpha} \sum_{n=1}^N\overline{Q}_n^{(N)},Z) = O(\max(g(N),N^{2-\alpha})).
    \end{align*}
    where $Z \sim \text{Exp}(\frac{ 2}{\sigma_{a}^2 + \nu_{s}^2 })$. 
    
  \end{lemma}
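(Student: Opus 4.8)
The plan is to collapse the $N$-dimensional dynamics onto the one-dimensional total-queue process and then feed it into the reflected-Brownian-motion (equivalently, exponential) Stein equation supplied by the framework of~\cite{zhou2020note}. Summing the per-server recursion over $n$ and using $\sum_n A_n = \overline{A}_{\Sigma}$, $\sum_n S_n = \overline{S}_{\Sigma}$, $\sum_n \overline{U}_n = \norms{\overline{\U}^{(N)}}_1 =: \overline{U}_{\Sigma}$, the scalar $\overline{q} := \norms{\overline{\Q}^{(N)}}_1$ obeys $\overline{q}(t+1) = \overline{q}(t) + \overline{A}_{\Sigma} - \overline{S}_{\Sigma} + \overline{U}_{\Sigma}$, which is \emph{policy-independent except through} $\overline{U}_{\Sigma}$; the load-balancing rule enters the analysis only through the cross term in~\eqref{eq:cross1}. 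The two facts I would lean on are that $\overline{A}_{\Sigma} - \overline{S}_{\Sigma}$ is independent of the state $\overline{q}(t)$, with $\ex{\overline{A}_{\Sigma} - \overline{S}_{\Sigma}} = -\epsilon = -N^{1-\alpha}$ and $\mathrm{Var}(\overline{A}_{\Sigma} - \overline{S}_{\Sigma}) = (\sigma_{\Sigma}^{(N)})^2 + (\nu_{\Sigma}^{(N)})^2 = N(\sigma_a^2 + \nu_s^2)$.

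Set $X = N^{-\alpha}\overline{q}$. For a fixed $h \in \mathrm{Lip}(1)$, I would let $f = f_h$ solve the Stein equation $-f'(x) + \tfrac{\sigma_a^2+\nu_s^2}{2} f''(x) = h(x) - \ex{h(Z)}$ on $[0,\infty)$ with reflection condition $f'(0) = 0$; this is the generator equation of a reflected Brownian motion with drift $-1$ and variance $\sigma_a^2+\nu_s^2$, whose stationary law is exactly $Z \sim \mathrm{Exp}(2/(\sigma_a^2+\nu_s^2))$, and the framework provides the Stein factors $\norms{f'}_\infty, \norms{f''}_\infty = O(1)$ together with Lipschitz control of $f''$. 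Invoking stationarity $\ex{f(X(t+1)) - f(X(t))} = 0$ (legitimate because throughput optimality gives finite moments and $f$ grows at most linearly) and Taylor expanding with $\Delta X = N^{-\alpha}(\overline{A}_{\Sigma} - \overline{S}_{\Sigma} + \overline{U}_{\Sigma})$,
\begin{align*}
f(X(t+1)) - f(X(t)) = f'(X(t))\,\Delta X + \tfrac12 f''(X(t))(\Delta X)^2 + \tfrac12\big(f''(\zeta) - f''(X(t))\big)(\Delta X)^2 .
\end{align*}
Isolating leading contributions via the independence and moment identities above, the first-order term yields $-N^{1-2\alpha}\ex{f'(X)}$ and the quadratic term yields $\tfrac{\sigma_a^2+\nu_s^2}{2}N^{1-2\alpha}\ex{f''(X)}$, so the zero-drift identity reads $N^{1-2\alpha}\ex{-f'(X) + \tfrac{\sigma_a^2+\nu_s^2}{2}f''(X)} + E = 0$, where $E$ collects all remaining terms.

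The hard part is the boundary cross term $E_1 = N^{-\alpha}\ex{f'(X)\,\overline{U}_{\Sigma}}$. The crude estimate $\abs{f'} \le \norms{f'}_\infty$ with $\ex{\overline{U}_{\Sigma}} = \epsilon = N^{1-\alpha}$ (read off the stationary identity $\ex{\overline{A}_{\Sigma} - \overline{S}_{\Sigma} + \overline{U}_{\Sigma}} = 0$) gives only $E_1 = O(N^{1-2\alpha})$, which is the same order as the main term and hence useless. To beat it I would exploit $f'(0) = 0$: writing $f'(X(t)) = f'(X(t+1)) + (f'(X(t)) - f'(X(t+1)))$ and using $\abs{f'(y)} \le \norms{f''}_\infty\, y$, the first piece is dominated by $N^{-\alpha}\norms{f''}_\infty\, \ex{\norms{\overline{\Q}^{(N)}(t+1)}_1 \overline{U}_{\Sigma}}$, which is \emph{exactly} where~\eqref{eq:cross1} enters, contributing $O(N^{1-2\alpha} g(N))$; the second piece is at most $\norms{f''}_\infty N^{-\alpha}\ex{\abs{\Delta \overline{q}}\,\overline{U}_{\Sigma}}$ and is negligible since $\abs{\Delta\overline{q}} = O(N)$ and $\ex{\overline{U}_{\Sigma}} = O(N^{1-\alpha})$. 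The remaining pieces of $E$—the $\epsilon^2$ correction $\tfrac12 N^{-2\alpha}\epsilon^2\ex{f''(X)}$, the second-order cross terms in $\overline{U}_{\Sigma}$, and the remainder $\tfrac12\ex{(f''(\zeta)-f''(X))(\Delta X)^2}$—are routine: they are handled by $\norms{f''}_\infty = O(1)$ and its Lipschitz bound, the estimates $\overline{U}_{\Sigma} \le \overline{S}_{\Sigma} = O(N)$, $\ex{\overline{U}_{\Sigma}} = O(N^{1-\alpha})$, and $\ex{\abs{\Delta\overline{q}}^3} \le \norms{\Delta\overline{q}}_\infty\, \ex{(\Delta\overline{q})^2} = O(N)\cdot O(N) = O(N^2)$, each contributing at most $O(N^{1-2\alpha}\,N^{2-\alpha})$ to $E$.

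Finally I would divide the zero-drift identity by $N^{1-2\alpha}$ and substitute the Stein equation, obtaining $\ex{h(X)} - \ex{h(Z)} = \ex{-f'(X) + \tfrac{\sigma_a^2+\nu_s^2}{2}f''(X)} = -E/N^{1-2\alpha} = O(\max(g(N), N^{2-\alpha}))$ uniformly over $h \in \mathrm{Lip}(1)$; taking the supremum gives the claimed Wasserstein bound. The single genuine obstacle is the treatment of $E_1$: recognizing that the naive bound fails and that the reflection condition $f'(0)=0$ must be used to manufacture the post-jump factor $\norms{\overline{\Q}^{(N)}(t+1)}_1$ so that the state-space-collapse hypothesis~\eqref{eq:cross1} can be applied; everything else is bookkeeping of the scaling exponents.
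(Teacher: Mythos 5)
Your proposal is correct and follows essentially the same route as the paper's own proof: collapse to the one-dimensional sum-queue recursion, the exponential/RBM Stein equation with matched drift and variance, the steady-state zero-drift identity justified by throughput optimality, a Taylor expansion, and—crucially—the identical treatment of the boundary term using $f'(0)=0$ together with the mean value theorem to manufacture the factor $\ex{\norms{\overline{\Q}^{(N)}(t+1)}_1\norms{\overline{\U}^{(N)}}_1}$ so that hypothesis~\eqref{eq:cross1} can be invoked. The differences are cosmetic (you normalize the Stein equation to $O(1)$ coefficients and expand around $X(t)$ before converting $f'(X(t))$ to $f'(X(t+1))$, whereas the paper keeps the scaled coefficients $\sigma^2=N\hat{\epsilon}^2(\sigma_a^2+\nu_s^2)$, $\theta=N\hat{\epsilon}^2$ and evaluates $f_h'$ at the post-jump state directly in the expansion), and your bookkeeping of the remainder terms is, if anything, slightly tighter than the paper's.
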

  \begin{proof}
	The proof is a direct application of the framework of Stein's method developed in~\cite{zhou2020note}. The full proof is presented in Appendix~\ref{pf:lem1}.
  \end{proof}
    

  \begin{lemma}
\label{lemma:2}
    Consider a set of load balancing systems parameterized by $N$ such that $\epsilon = N^{1-\alpha}$, $\alpha > 1$  with $\mu_{\Sigma} = \theta(N)$ and $A_{max} = \theta(N)$. Assume that $({\sigma}_{\Sigma}^{(N)})^2 = N^2 \tilde{\sigma}_a^2$ and $(\nu_{\Sigma}^{(N)})^2 = N^2 \tilde{\sigma}_s^2$. Suppose that the load balancing policy is throughput optimal and there exists a function $g(N)$ such that 
    \begin{align}
    \label{eq:cross3}
      \frac{1}{N^2}\ex{\norms{\overline{\Q}^{(N)}(t+1)}_1\norms{\overline{\U}^{(N)}}_1} = O(g(N)).
    \end{align}
    Then, we have 
    \begin{align*}
       d_W(N^{-\alpha-1} \sum_{n=1}^N\overline{Q}_n^{(N)},Z) = O(\max(g(N),N^{-\alpha})).
    \end{align*}
    where $Z \sim \text{Exp}(\frac{ 2 }{\tilde{\sigma}_{a}^2 + \tilde{\nu}_{s}^2})$. 
  \end{lemma}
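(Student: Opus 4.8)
\emph{Proof plan.} The plan is to run Stein's method with the generator of the exponential limit applied to the single scalar $\overline{q} \triangleq N^{-\alpha-1}\norms{\overline{\Q}^{(N)}}_1$; this parallels the argument for Lemma~\ref{lemma:1}, only with the scaling exponents adjusted to the quadratic-variance regime. Writing $\beta \triangleq (\tilde{\sigma}_a^2 + \tilde{\nu}_s^2)/2$, the target $Z\sim\text{Exp}(1/\beta)$ (mean $\beta$) is the stationary law of the reflected diffusion with generator $\mathcal{L}f(x) = -f'(x) + \beta f''(x)$ and reflecting condition $f'(0)=0$. For $h\in\text{Lip}(1)$ I would take $f=f_h$ solving the Stein equation $\mathcal{L}f = h - \ex{h(Z)}$ and use the standard exponential Stein factors $\norms{f'}_\infty, \norms{f''}_\infty, \norms{f'''}_\infty = O(1)$ (the last by differentiating the Stein ODE $\beta f''-f'=\tilde h$, since $\tilde h \triangleq h - \ex{h(Z)}$ is Lipschitz). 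Then $\ex{h(\overline q)} - \ex{h(Z)} = \ex{-f'(\overline q) + \beta f''(\overline q)}$, and the entire task reduces to showing this right-hand side is $O(\max(g(N), N^{-\alpha}))$.

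To do that I would exploit that in steady state $\ex{f(\overline q') - f(\overline q)} = 0$, where $\overline q' = \overline q + c(\delta + u)$ with $c = N^{-\alpha-1}$, $\delta \triangleq \overline{A}_{\Sigma} - \overline{S}_{\Sigma}$, and $u \triangleq \norms{\overline{\U}^{(N)}}_1$; finiteness of all the moments involved follows from throughput optimality. Taylor expanding to second order gives $0 = c\ex{f'(\overline q)(\delta+u)} + \tfrac{c^2}{2}\ex{f''(\overline q)(\delta+u)^2} + \ex{R}$ with $R$ a third-order remainder. Since the time-$t$ increment $\delta$ is independent of the time-$t$ state $\overline q$, I substitute $\ex{f'(\overline q)\delta} = -\epsilon\,\ex{f'(\overline q)}$ and $\ex{f''(\overline q)\delta^2} = \ex{f''(\overline q)}\ex{\delta^2}$ with $\ex{\delta^2} = N^2(\tilde{\sigma}_a^2 + \tilde{\nu}_s^2) + \epsilon^2$. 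Dividing by $c\epsilon$ and rearranging turns $\ex{-f'(\overline q) + \beta f''(\overline q)}$ into the sum of a variance-matching term $\big(\beta - \tfrac{c}{2\epsilon}\ex{\delta^2}\big)\ex{f''(\overline q)}$, a boundary term $-\tfrac{1}{\epsilon}\ex{f'(\overline q)u}$, a lower-order term $-\tfrac{c}{2\epsilon}\ex{f''(\overline q)(2\delta u + u^2)}$, and $-\tfrac{1}{c\epsilon}\ex{R}$.

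The routine bounds come from the scaling. The variance-matching coefficient is exactly $\beta - \tfrac{c}{2\epsilon}\ex{\delta^2} = -\tfrac{c\epsilon}{2} = -\tfrac{1}{2} N^{-2\alpha}$, so that term is $O(N^{-2\alpha}) = O(N^{-\alpha})$. Using the bounded supports ($|\delta| = O(N)$ and $u \le S_{max}N = O(N)$ deterministically) together with the mean-drift identity $\ex{u} = \epsilon$ (from $\ex{\delta + u}=0$ in steady state), I get $\ex{|\delta|u}, \ex{u^2} = O(N)\ex{u} = O(N^{2-\alpha})$, whence the lower-order term is $\tfrac{c}{2\epsilon}O(N^{2-\alpha}) = O(N^{-\alpha})$; the same moment bounds give $\ex{|\delta+u|^3} = O(N^3)$ and hence $\tfrac{1}{c\epsilon}\ex{R} = O(\tfrac{c^2}{\epsilon}N^3) = O(N^{-\alpha})$.

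The main obstacle is the boundary term $-\tfrac{1}{\epsilon}\ex{f'(\overline q)u}$: the crude bound $\tfrac{1}{\epsilon}\norms{f'}_\infty\ex{u} = \tfrac{1}{\epsilon}\norms{f'}_\infty\epsilon = O(1)$ is far too weak, so the correlation between $f'(\overline q)$ and $u$ must be used. The key maneuver is to pass to the post-jump state, $\ex{f'(\overline q)u} = \ex{f'(\overline q')u} + \ex{(f'(\overline q) - f'(\overline q'))u}$, bound the difference by $\norms{f''}_\infty c\,\ex{|\delta+u|u} = O(cN\epsilon)$ (contributing $O(cN) = O(N^{-\alpha})$), and then exploit the reflecting boundary $f'(0)=0$ to write $|f'(\overline q')| \le \norms{f''}_\infty\,\overline q' = \norms{f''}_\infty\,c\,\norms{\overline{\Q}^{(N)}(t+1)}_1$. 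This converts the post-jump piece into $\tfrac{c}{\epsilon}\ex{\norms{\overline{\Q}^{(N)}(t+1)}_1\norms{\overline{\U}^{(N)}}_1} = N^{-2}\cdot O(N^2 g(N)) = O(g(N))$ by the cross-term hypothesis~\eqref{eq:cross3}. Collecting the four terms yields $\ex{-f'(\overline q)+\beta f''(\overline q)} = O(\max(g(N), N^{-\alpha}))$ uniformly over $h\in\text{Lip}(1)$, which is the claimed Wasserstein bound; the only genuine work beyond Lemma~\ref{lemma:1} is verifying that every exponent shifts correctly under the $N^{-\alpha-1}$ scaling.
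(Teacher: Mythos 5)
Your proof is correct and takes essentially the same route as the paper's: the same exponential Stein equation (yours is the paper's generator $\tfrac{\sigma^2}{2}f''-\theta f'$ normalized by $\theta$), the same steady-state drift identity with Taylor expansion and independence-based variance matching, and the same key step of combining $f'(0)=0$ with the mean value theorem to convert the cross term into the quantity controlled by hypothesis~\eqref{eq:cross3}. The only cosmetic difference is that you Taylor-expand everything at the pre-jump state and then bridge $f'(\overline{q})$ to $f'(\overline{q}')$ at an extra $O(N^{-\alpha})$ cost, whereas the paper's expansion places $f_h'$ at the post-jump state directly; both give the claimed $O(\max(g(N),N^{-\alpha}))$ bound.
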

  \begin{proof}
  	The proof is nearly the same as that of Lemma~\ref{lemma:1}. See Appendix~\ref{pf:lem2}
  \end{proof}

  Now, armed with the lemmas above, we can directly analyze a class of load balancing schemes in the many-server heavy-traffic regime. In particular, we focus on the class introduced in one of our early works~\cite{zhou2018flexible}, which have been well-studied via drift-based method. Based on our framework, we can directly plug in the bounds obtained in the previous work to establish new asymptotic performance. 

  We first summarize the key ideas behind this class as follows. More details can be found in~\cite{zhou2018flexible}.

  Consider an $N$-dimensional cone $\mathcal{K}_{\gamma}$, which is finitely generated by a set of $N$ vectors $\{\mathbf{b}^{(n)}, n \in \mathcal{N}\}$, i.e., 
\begin{equation}
\label{eq:def_cone}
	\mathcal{K}_{\gamma} = \left\{ \mathbf{x} \in \mathbb{R}^N: \mathbf{x}  = \sum_{n \in \mathcal{N}} w_n\mathbf{b}^{(n)}, w_n \ge 0 \text{ for all } n \in \mathcal{N} \right\},
\end{equation}
where $\mathbf{b}^{(n)}$ is an $N$-dimensional vector with the $n$th component being $1$ and  $\gamma$ everywhere else for some $\gamma \in [0,1]$. It follows that, if $\gamma = 0$, the cone $\mathcal{K}_{\gamma}$ is the non-negative orthant of $\mathbb{R}^N$, and if $\gamma = 1$, the cone $\mathcal{K}_{\gamma}$ reduces to the single-dimensional line in which all the components are equal.

For a given cone $\mathcal{K}_{\gamma}$, we decompose $\overline{\Q}$ into two parts as follows
\begin{equation*}
	\overline{\Q} = \overline{\Q}_{\parallel} + \overline{\Q}_{\perp},
\end{equation*}
where $ \overline{\Q}_{\parallel}$ is the projection onto the cone $\mathcal{K}_{\gamma}$, referred to as the parallel component,  and $\overline{\Q}_{\perp}$ is the remainder, referred to as the perpendicular component

  Given a load balancing policy $\eta(t)$, we define the dispatching preference as 
  \begin{align*}
  	\Delta_{\eta(t)}(t) = \mathbf{P}_{\eta(t)}(t) - \mathbf{P}_{rand}(t),
  \end{align*}
  where $\mathbf{P}_{\eta(t)}(t)$ is the dispatching distribution vector and the $n$th component is the probability of selecting the $n$th shortest queue under $\eta(t)$. $\mathbf{P}_{rand}(t)$ is the dispatching distribution under (weighted) random routing.

  \begin{definition}[Flexible Class $\Pi_1$]
  	A load balancing scheme is said to be in the class $\Pi_1$ if there exists a cone $\mathcal{K}_{\gamma}$ such that for all $\Q(t) \notin \mathcal{K}_{\gamma}$,
  	\begin{enumerate}
  		\item there exists a $k \in \{2,3,\ldots, N\}$ such that ${\Delta}_n \ge 0$ for all $n<k$ and ${\Delta}_n \le 0$ for all $n\ge k$.
  		\item $\min(|{\Delta}_1|, |{\Delta}_N|) \ge \delta $ for some constant $\delta$.
  	\end{enumerate}
  \end{definition}
  \begin{remark}
  	The flexibility of this class comes from three dimensions: (a) it includes JSQ and Power-of-$d$ as special cases. Moreover, it also include many other useful policies as discussed in~\cite{zhou2017designing,zhou2018flexible}. (b) it does not require that the state-space collapse onto the line $\mathbf{c} = \{1,1,\ldots,1\}$ as in previous policies.(c) it also enables us to study the trade-off between flexibility and performance by scaling the constant $\delta$ and $\alpha$ with the load or the number of servers.
  \end{remark}
  \begin{theorem}
  \label{thm:1}
  	Given any load balancing scheme in class $\Pi_1$. Consider a set of load balancing systems parameterized by $N$ such that $\epsilon = N^{1-\alpha}$and $\mu_{\Sigma} = \theta(N)$, $A_{max} = \theta(N)$.
  	\begin{enumerate}
  		\item Assume that $(\sigma_{\Sigma}^{(N)})^2 = N\sigma_a^2$ and $(\nu_{\Sigma}^{(N)})^2 = N\sigma_s^2$. For any $r\ge 2$, we have
  	\begin{align*}
       d_W(N^{-\alpha} \sum_{n=1}^N\overline{Q}_n^{(N)},Z) = O(N^{4-\alpha + \frac{\alpha-1}{r}}).
    \end{align*}
    $Z \sim \text{Exp}(\frac{ 2}{\sigma_{a}^2 + \nu_{s}^2 })$. Thus, for any $\alpha>4$, the distribution of the sum queue length scaled by $N^{-\alpha}$ converges to an exponential random variable with mean $\frac{\sigma_{a}^2 + \nu_{s}^2}{2}$. 
    \footnote{Note that instead of $\alpha > 2$ in~\cite{hurtado2020load}, it indeed needs $\alpha > 4$ for the same result to hold since $A_{max}$ has to be $\theta(N)$ rather than a constant.}
    \item Assume that $({\sigma}_{\Sigma}^{(N)})^2 = N^2 \tilde{\sigma}_a^2$ and $(\nu_{\Sigma}^{(N)})^2 = N^2 \tilde{\sigma}_s^2$. We have for any $r\ge 2$
    \begin{align*}
       d_W(N^{-\alpha-1} \sum_{n=1}^N\overline{Q}_n^{(N)},Z) = O(N^{3-\alpha + \frac{\alpha-1}{r}}).
    \end{align*}
    $Z \sim \text{Exp}(\frac{ 2 }{\tilde{\sigma}_{a}^2 + \tilde{\nu}_{s}^2})$.  Thus, for any $\alpha > 3$, the distance approaches zero as $N \to \infty$ 
  	\end{enumerate}

  \end{theorem}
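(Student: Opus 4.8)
The plan is to derive both parts as immediate consequences of Lemma~\ref{lemma:1} and Lemma~\ref{lemma:2}: the only hypothesis that is not already assumed is the cross-term bound \eqref{eq:cross1} (resp.\ \eqref{eq:cross3}), so the entire task reduces to exhibiting a suitable $g(N)$. I would treat part~1 in full and obtain part~2 verbatim after updating the variance scaling, the normalization $\tfrac1{N^2}$ in \eqref{eq:cross3}, and the scaling exponent. For part~1 I would show one may take $g(N)=N^{4-\alpha+\frac{\alpha-1}{r}}$; since $4-\alpha+\frac{\alpha-1}{r}\ge 2-\alpha$ for every $r\ge 2$, Lemma~\ref{lemma:1} then gives $d_W(\cdot)=O(\max(g(N),N^{2-\alpha}))=O(N^{4-\alpha+\frac{\alpha-1}{r}})$, and for a fixed $\alpha>4$ one chooses $r>\frac{\alpha-1}{\alpha-4}$ so the exponent is negative and the distance vanishes.

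The structural heart of the argument is a pointwise bound on $\norms{\overline{\Q}^{(N)}(t+1)}_1\norms{\overline{\U}^{(N)}}_1$. First, from $U_n(t)=\max(S_n(t)-A_n(t)-Q_n(t),0)$ one checks that $U_n(t)>0$ forces $Q_n(t+1)=0$, so the diagonal terms vanish and $\norms{\overline{\U}^{(N)}}_1>0$ merely requires some queue $m$ to be empty. I would then exploit the geometry of the cone $\mathcal{K}_{\gamma}$ (with $\gamma>0$): writing a point of $\mathcal{K}_\gamma$ as $x_n=(1-\gamma)w_n+\gamma W$ with $W=\sum_n w_n$, emptiness of queue $m$ gives $\gamma W\le \overline{Q}_{\parallel,m}=-\overline{Q}_{\perp,m}\le\norms{\overline{\Q}_{\perp}}_2$, whence every coordinate of the projection $\overline{\Q}_{\parallel}$ is at most $\norms{\overline{\Q}_{\perp}}_2/\gamma$. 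Consequently, on $\{\norms{\overline{\U}^{(N)}}_1>0\}$ one has $\max_n \overline{Q}_n(t+1)\le (1+\tfrac1\gamma)\norms{\overline{\Q}_{\perp}(t+1)}_2$ and therefore
\begin{align*}
\norms{\overline{\Q}^{(N)}(t+1)}_1\norms{\overline{\U}^{(N)}}_1 \le C_0\, N\,\norms{\overline{\Q}_{\perp}(t+1)}_2\,\norms{\overline{\U}^{(N)}}_1,
\end{align*}
with $C_0=1+1/\gamma$. This is precisely where membership in $\Pi_1$ (state-space collapse onto a cone) enters and replaces the single-dimensional collapse used in~\cite{hurtado2020load}, showing that a one-dimensional collapse is not needed for the asymptotic.

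It remains to estimate the two factors. For the unused service I would use the stationarity identity $\ex{\norms{\overline{\U}^{(N)}}_1}=\epsilon=N^{1-\alpha}$, which follows from the zero steady-state drift of the total queue (i.e.\ $\lambda_{\Sigma}-\mu_{\Sigma}+\ex{\norms{\overline{\U}^{(N)}}_1}=0$), together with the deterministic bound $\norms{\overline{\U}^{(N)}}_1\le N S_{max}$. For the perpendicular component I would invoke the $r$-th moment state-space-collapse bound for $\Pi_1$ established by drift arguments in~\cite{zhou2018flexible}, whose right-hand side is a power of $N$ governed by the restoring constant $\delta$ of the class and by the variance scaling $(\sigma_{\Sigma}^{(N)})^2=N\sigma_a^2$, $(\nu_{\Sigma}^{(N)})^2=N\sigma_s^2$. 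Applying H\"older's inequality with exponents $r$ and $r'=r/(r-1)$ to split $\ex{\norms{\overline{\Q}_{\perp}}_2\norms{\overline{\U}^{(N)}}_1}$, substituting these two bounds, and collecting powers of $N$ (using $A_{max},\mu_{\Sigma}=\theta(N)$ and $\epsilon=N^{1-\alpha}$) then yields $g(N)=N^{4-\alpha+\frac{\alpha-1}{r}}$; the fractional term $\frac{\alpha-1}{r}$ is the $1/r$-power interpolation between the first moment $\epsilon$ of $\norms{\overline{\U}^{(N)}}_1$ and the crude bound $N S_{max}$, which is exactly why the statement is quantified over all $r\ge 2$.

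I expect the main obstacle to lie in this last step rather than in the reduction or the geometry: one must import the SSC moment bound from~\cite{zhou2018flexible} with the correct $N$-dependence and verify that, in the many-server scaling where both $A_{max}$ and $\mu_{\Sigma}$ grow like $N$, the per-slot increments and the restoring drift $\delta$ enter the moment bound so that the exponents combine to exactly $4-\alpha+\frac{\alpha-1}{r}$ (and to $3-\alpha+\frac{\alpha-1}{r}$ in part~2, once the $N^2$ variance scaling and the $\tfrac1{N^2}$ normalization of \eqref{eq:cross3} are accounted for). The extra powers of $N$ injected by this scaling---through $A_{max},\mu_{\Sigma}=\theta(N)$ in the increment and variance terms, together with the factor $N$ produced by summing the cone estimate over all coordinates---are what raise the threshold to $\alpha>4$, as noted in the footnote to the theorem, rather than the $\alpha>2$ one would read off from a constant-$A_{max}$, single-dimensional collapse.
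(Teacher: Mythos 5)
Your overall route is the same as the paper's: reduce both parts to Lemmas~\ref{lemma:1} and~\ref{lemma:2}, and bound the cross term $\ex{\norms{\overline{\Q}^{(N)}(t+1)}_1\norms{\overline{\U}^{(N)}}_1}$ by combining the cone geometry of $\Pi_1$ (positive unused service forces an empty queue, which pins the parallel component to the perpendicular one), H\"older's inequality, the stationarity identity $\ex{\norms{\overline{\U}}_1}=\epsilon$, and the $r$-th moment state-space-collapse bound $K_r=O(N^3)$ imported from~\cite{zhou2018flexible}. Your self-contained derivation of the geometric step (via $x_n=(1-\gamma)w_n+\gamma W$) is correct for $\gamma>0$ and is precisely the content of the inequality the paper cites from that reference.

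There is, however, a quantitative slip in how you treat the unused-service factor, and it means your argument does not deliver the exponent you claim. After the pointwise bound $\norms{\overline{\Q}(t+1)}_1\norms{\overline{\U}}_1 \le (1+\tfrac{1}{\gamma})N\norms{\overline{\Q}_{\perp}(t+1)}_2\norms{\overline{\U}}_1$, H\"older produces the factor $\left(\ex{\norms{\overline{\U}}_1^{r'}}\right)^{1/r'}$, and interpolating with the crude bound $\norms{\overline{\U}}_1\le NS_{max}$ gives $\left(NS_{max}\right)^{1/r}\epsilon^{1-1/r} = S_{max}^{1/r}N^{1-\alpha+\alpha/r}$ --- note the exponent $\alpha/r$, not $(\alpha-1)/r$. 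Collecting powers then yields $g(N)=O(N^{4-\alpha+\alpha/r})$, weaker by $N^{1/r}$ than the theorem's $N^{4-\alpha+\frac{\alpha-1}{r}}$. The paper avoids this loss because it never collapses $\overline{\U}$ to its $\ell_1$ norm: it keeps the componentwise pairing $\ex{\inner{\overline{\U}}{-\overline{\Q}^+_{\perp}}} \le \left(\ex{\norm{\overline{\U}}^{r'}_{r'}}\right)^{1/r'}\left(\ex{\norm{\overline{\Q}^+_{\perp}}^r_r}\right)^{1/r}$ and uses the per-component bound $U_n\le S_{max}$ (a constant independent of $N$), so that $\norm{\overline{\U}}_{r'}^{r'}\le S_{max}^{r'-1}\norms{\overline{\U}}_1$ and hence $\left(\ex{\norm{\overline{\U}}^{r'}_{r'}}\right)^{1/r'}\le S_{max}^{1/r}\epsilon^{1-1/r}$ with no extra $N^{1/r}$. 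Your qualitative conclusions survive --- for $\alpha>4$ (resp.\ $\alpha>3$) one can still choose $r$ large enough that $4-\alpha+\alpha/r<0$, and the stated rate at a given $r$ can even be recovered by running your bound at $r''\ge r\alpha/(\alpha-1)$ --- but as written, the claim that one may take $g(N)=N^{4-\alpha+\frac{\alpha-1}{r}}$ is not what your chain of inequalities proves. The clean fix is to retain the inner-product, dual-norm form of H\"older, as the paper does.
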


  \begin{proof}
  	Based on Lemmas~\ref{lemma:1} and~\ref{lemma:2}  all we need to study is the term $\ex{\norms{\overline{\Q}^{(N)}(t+1)}_1\norms{\overline{\U}^{(N)}}_1}$. In particular, it follows from the proof in~\cite{zhou2018flexible} that for any scheme in class $\Pi_1$ and any $r\ge 2$
  	\begin{align}
  	&\ex{\norms{\overline{\Q}^{(N)}(t+1)}_1\norms{\overline{\U}^{(N)}(t)}_1}\nonumber\\
  		\le &\frac{N}{\gamma}\ex{\inner{\overline{\U}}{-\overline{\Q}^+_{\perp}}}\nonumber\\
  		\le &\frac{N}{\gamma} { \left(\ex{\norm{\overline{\U}}^{r'}_{r'} } \right)^{\frac{1}{r'}}  \left(\ex{\norm{\overline{\Q}^+_{\perp}}^r_r} \right)^{\frac{1}{r}}}.\nonumber\\
    \le& \frac{N}{\gamma}  \left(c_{r'} \epsilon \right)^{\frac{1}{r'}} \left(\ex{\norm{\overline{\Q}^+_{\perp}}^r_2} \right)^{\frac{1}{r}}.\nonumber\\
    \le&\frac{N}{\gamma}  \left(c_{r'} \epsilon \right)^{\frac{1}{r'}} \left(\ex{\norm{\overline{\Q}_{\perp}}^r_2} \right)^{\frac{1}{r}}\nonumber\\
    \lep{a}& \frac{N}{\gamma \delta} (S_{max})^{\frac{1}{r}}K_r\epsilon^{1-1/r}\nonumber\\
    =&\frac{L_r}{\gamma\delta}N^{5-\alpha-\frac{1-\alpha}{r}},
  	\end{align}
  	where in (a) $K_r \triangleq \left[ \left(\frac{8NL}{\mu_{\Sigma}}\right)^r + r! \left(\frac{32D^2N + 4D\mu_{\Sigma}}{\mu_{\Sigma}}\right)^r \right]^{\frac{1}{r}}$, and $L = N\max(A_{max}, S_{max})^2$,  $D = \sqrt{N}\max(A_{max},S_{max})$. Since $A_{max} = \theta(N)$ and $\mu_{\Sigma} = \theta(N)$, we have $K_r \le L_r N^3$ for some constant $L_r$ independent of $N$. Thus, we have for any $r\ge2$
  	\begin{align*}
  		\ex{\norms{\overline{\Q}^{(N)}(t+1)}_1\norms{\overline{\U}^{(N)}(t)}_1} = O(N^{5-\alpha + \frac{\alpha-1}{r} }).
  	\end{align*}
  	Then, the results of Theorem~\ref{thm:1} directly follow from Lemmas~\ref{lemma:1} and~\ref{lemma:2}. 
  \end{proof}

\section{Conclusion}
In this note, we apply the recently developed framework of Stein's method for heavy-traffic analysis to study asymptotic performance of general load balancing schemes in many-server heavy-traffic regime. The main results can be easily obtained by plugging in well-known bounds obtained by drift-based method.

\bibliographystyle{plain}
\bibliography{ref} 

\begin{thebibliography}{10}

\bibitem{banerjee2019join}
Sayan Banerjee, Debankur Mukherjee, et~al.
\newblock Join-the-shortest queue diffusion limit in halfin--whitt regime: Tail
  asymptotics and scaling of extrema.
\newblock {\em The Annals of Applied Probability}, 29(2):1262--1309, 2019.

\bibitem{braverman2020steady}
Anton Braverman.
\newblock Steady-state analysis of the join-the-shortest-queue model in the
  halfin--whitt regime.
\newblock {\em Mathematics of Operations Research}, 2020.

\bibitem{eschenfeldt2018join}
Patrick Eschenfeldt and David Gamarnik.
\newblock Join the shortest queue with many servers. the heavy-traffic
  asymptotics.
\newblock {\em Mathematics of Operations Research}, 43(3):867--886, 2018.

\bibitem{gibbs2002choosing}
Alison~L Gibbs and Francis~Edward Su.
\newblock On choosing and bounding probability metrics.
\newblock {\em International statistical review}, 70(3):419--435, 2002.

\bibitem{gupta2019load}
Varun Gupta and Neil Walton.
\newblock Load balancing in the nondegenerate slowdown regime.
\newblock {\em Operations Research}, 67(1):281--294, 2019.

\bibitem{hurtado2020load}
Daniela Hurtado-Lange and Siva~Theja Maguluri.
\newblock Load balancing system under join the shortest queue:
  Many-server-heavy-traffic asymptotics.
\newblock {\em arXiv preprint arXiv:2004.04826}, 2020.

\bibitem{liu2019universal}
Xin Liu and Lei Ying.
\newblock On universal scaling of distributed queues under load balancing.
\newblock {\em arXiv preprint arXiv:1912.11904}, 2019.

\bibitem{liu2019simple}
Xin Liu and Lei Ying.
\newblock A simple steady-state analysis of load balancing algorithms in the
  sub-halfin-whitt regime.
\newblock {\em ACM SIGMETRICS Performance Evaluation Review}, 46(2):15--17,
  2019.

\bibitem{zhou2020note}
Xingyu Zhou and Ness Shroff.
\newblock A note on stein's method for heavy-traffic analysis.
\newblock {\em arXiv preprint arXiv:2003.06454}, 2020.

\bibitem{zhou2018flexible}
Xingyu Zhou, Jian Tan, and Ness Shroff.
\newblock Flexible load balancing with multi-dimensional state-space collapse:
  Throughput and heavy-traffic delay optimality.
\newblock {\em Performance Evaluation}, 127:176--193, 2018.

\bibitem{zhou2018degree}
Xingyu Zhou, Fei Wu, Jian Tan, Kannan Srinivasan, and Ness Shroff.
\newblock Degree of queue imbalance: Overcoming the limitation of heavy-traffic
  delay optimality in load balancing systems.
\newblock {\em Proceedings of the ACM on Measurement and Analysis of Computing
  Systems}, 2(1):1--41, 2018.

\bibitem{zhou2017designing}
Xingyu Zhou, Fei Wu, Jian Tan, Yin Sun, and Ness Shroff.
\newblock Designing low-complexity heavy-traffic delay-optimal load balancing
  schemes: Theory to algorithms.
\newblock {\em Proceedings of the ACM on Measurement and Analysis of Computing
  Systems}, 1(2):39, 2017.

\end{thebibliography}

\section*{Appendix}
\appendix
\section{Proof of Lemma~\ref{lemma:1}}
\begin{proof}
\label{pf:lem1}
    Replace $\epsilon\norms{\overline{\Q}^{(\epsilon)}}_1$ in Eq.(6) of~\cite{zhou2020note} by $\hat{\epsilon}\norms{\overline{\Q}^{(\epsilon)}}_1$ with $\epsilon = N^{1-\alpha} = \mu_{\Sigma} - \lambda_{\Sigma}$ and $\hat{\epsilon} = N^{-\alpha}$. Taking expectation of both sides, yields
    \begin{align}
    \label{eq:stein_routing}
      \left|\ex{h(\hat{\epsilon} \norms{\overline{\Q}^{(\epsilon)}}_1)} - \ex{h(Z)}\right| = \left|\ex{\frac{1}{2}\sigma^2 f_h''\left(\hat{\epsilon}\norms{\overline{\Q}^{(\epsilon)}}_1\right)-\theta f_h'\left(\hat{\epsilon}\norms{\overline{\Q}^{(\epsilon)}}_1\right)}\right|
    \end{align}
    Now, we focus on the RHS. In particular, we have
    \begin{align*}
      &\ex{\frac{1}{2}\sigma^2 f_h''\left(\hat{\epsilon}\norms{\overline{\Q}^{(\epsilon)}}_1\right)-\theta f_h'\left(\hat{\epsilon}\norms{\overline{\Q}^{(\epsilon)}}_1\right)}\\
      \ep{a}&\ex{\frac{1}{2}\sigma^2 f_h''\left(\hat{\epsilon}\norms{\overline{\Q}^{(\epsilon)}}_1\right)-\theta f_h'\left(\hat{\epsilon}\norms{\overline{\Q}^{(\epsilon)}}_1\right) - \left(f_h\left(\hat{\epsilon}\norms{\overline{\Q}^{(\epsilon)}(t+1)}_1\right) -f_h\left(\hat{\epsilon}\norms{\overline{\Q}^{(\epsilon)}(t)}_1\right) \right)}\\
      =&\ex{\frac{1}{2}\sigma^2 f_h''\left(\hat{\epsilon}\norms{\overline{\Q}}_1\right)-\theta f_h'\left(\hat{\epsilon}\norms{\overline{\Q}}_1\right)} - \ex{f_h\left(\hat{\epsilon}(\norms{\overline{\Q}(t)}_1 + \norms{\overline{\A}(t)}_1 - \norms{\overline{\s}(t)}_1 + \norms{\overline{\U}(t)}_1)\right) - f_h\left(\hat{\epsilon}\norms{\overline{\Q}}_1\right)}
    \end{align*}
    where (a) holds since the policy is throughput optimal and the result (a) in Lemma 1 of~\cite{zhou2020note}.

    For the second expectation, we have
    \begin{align*}
      &\ex{f_h\left(\hat{\epsilon}(\norms{\overline{\Q}(t)}_1 + \norms{\overline{\A}(t)}_1 - \norms{\overline{\s}(t)}_1 + \norms{\overline{\U}(t)}_1)\right) - f_h\left(\hat{\epsilon}\norms{\overline{\Q}}_1\right)}\\
      =&{\ex{\hat{\epsilon}^2\frac{f_h''(\hat{\epsilon}\norms{\overline{\Q}}_1)}{2}\left(\norms{\overline{\A}}_1-\norms{\overline{\s}}_1 \right)^2 + \hat{\epsilon} f_h'(\hat{\epsilon} \norms{\overline{\Q}}_1)\left(\norms{\overline{\A}}_1-\norms{\overline{\s}}_1\right)}} \\
      &+\ex{\hat{\epsilon}^3\frac{f_h'''(\eta)}{6} \left(\norms{\overline{\A}}_1-\norms{\overline{\s}}_1\right)^3 + \hat{\epsilon}\norms{\overline{\U}}_1f_h'(\hat{\epsilon}\norms{\overline{\Q}(t+1)}_1) -\hat{\epsilon}^2\frac{f_h''(\xi)}{2}\norms{\overline{\U}}_1^2 }\\
      =&\ex{\hat{\epsilon}^2\frac{f_h''(\hat{\epsilon}\norms{\overline{\Q}}_1)}{2}\left( N\sigma_a^2 + N\nu_s^2\right) -N\hat{\epsilon}^2f_h'(\hat{\epsilon}\norms{\overline{\Q}}_1)}\\
      &+\ex{\hat{\epsilon}^4\frac{f_h''(\hat{\epsilon}\norms{\overline{\Q}}_1)}{2} + \hat{\epsilon}^3\frac{f_h'''(\eta)}{6} \left(\norms{\overline{\A}}_1-\norms{\overline{\s}}_1\right)^3 + \hat{\epsilon}\norms{\overline{\U}}_1f_h'(\hat{\epsilon}\norms{\overline{\Q}(t+1)}_1) -\hat{\epsilon}^2\frac{f_h''(\xi)}{2}\norms{\overline{\U}}_1^2 }
    \end{align*}
    Now, let $\sigma^2 =N\hat{\epsilon}^2\left(\sigma_a^2 + \nu_s^2\right)$ and $\theta = N\hat{\epsilon}^2$ in Eq.~\eqref{eq:stein_routing}, we have 
    \begin{align*}
      \left|\ex{h(\hat{\epsilon}\norms{\overline{\Q}^{(\epsilon)}}_1)} - \ex{h(Z)}\right| &\le \underbrace{\ex{\left|\hat{\epsilon}^3\frac{f_h'''(\eta)}{6} \left(\norms{\overline{\A}}_1-\norms{\overline{\s}}_1\right)^3\right| + \left|\hat{\epsilon}^2\frac{f_h''(\xi)}{2}\norms{\overline{\U}}_1^2\right| + \left|\hat{\epsilon}^4\frac{f_h''(\hat{\epsilon}\norms{\overline{\Q}}_1)}{2}\right|  }}_{\mathcal{T}_1}\\
      &+ \underbrace{\ex{\left|\hat{\epsilon}\norms{\overline{\U}}_1f_h'(\hat{\epsilon}\norms{\overline{\Q}(t+1)}_1)\right|}}_{\mathcal{T}_2}\nonumber
    \end{align*}

    For $\mathcal{T}_1$, we have 
    \begin{align*}
      \mathcal{T}_1 &\le \hat{\epsilon}^3\frac{\norm{f_h'''}}{6}\ex{\overline{A}_{\Sigma}^3 + \overline{S}_{\Sigma}^3 + 3\mu_{\Sigma}(\overline{A}_{\Sigma}^2 + \overline{S}_{\Sigma}^2)} + \hat{\epsilon}^2\frac{\norm{f_h''}}{2}\ex{\norms{\overline{\U}}_1^2} + \hat{\epsilon}^4 \frac{\norm{f_h''}}{2}\\
      &\le \frac{2\hat{\epsilon} }{3\left(N\sigma_a^2 + N\nu_s^2\right)}\ex{\overline{A}_{\Sigma}^3 + \overline{S}_{\Sigma}^3 + 3\mu_{\Sigma}(\overline{A}_{\Sigma}^2 + \overline{S}_{\Sigma}^2)} + \frac{1}{2N}\ex{\norms{\overline{\U}}_1^2} + \frac{1}{2N}\hat{\epsilon}^2\\
      &\lep{a} O(N^2\hat{\epsilon} ) + S_{max}\ex{\norms{\overline{\U}}_1}\\
      &\ep{b} O(N^{2-\alpha}),
    \end{align*}
    where (a) holds since $A_{max} = \theta(N)$, $\mu_{\Sigma} = \theta(N)$ and $S_{max}$ is a constant independent of $N$; (b) is true since $\ex{\norms{\overline{\U}}_1} =\epsilon = N^{1-\alpha}$ and $\hat{\epsilon} = N^{-\alpha}$.

    For $\mathcal{T}_2$, we have
    \begin{align*}
      \mathcal{T}_2 &= \ex{\left|\hat{\epsilon}\norms{\overline{\U}}_1f_h'(\hat{\epsilon}\norms{\overline{\Q}(t+1)}_1) - \hat{\epsilon}\norms{\overline{\U}}_1f_h'(0)\right|}\\
      &=\ex{\left|\hat{\epsilon}^2\norms{\overline{\Q}(t+1)}_1\norms{\overline{\U}}_1f_h''(\zeta)\right|}\\
      &\le \frac{1}{N}\ex{\norms{\overline{\Q}(t+1)}_1\norms{\overline{\U}}_1}\\
      & = O(g(N))
    \end{align*}
    Thus, we have 
    \begin{align*}
       \left|\ex{h(\epsilon\norms{\overline{\Q}^{(\epsilon)}}_1)} - \ex{h(Z)}\right| \le \mathcal{T}_1 + \mathcal{T}_2 = O(\max(g(N), N^{2-\alpha} )),
    \end{align*}
    which completes the proof of Lemma~\ref{lemma:1}.
 \end{proof}

\section{Proof of Lemma~\ref{lemma:2}}
\begin{proof}
\label{pf:lem2}
It follows exactly the same procedure as the proof of Lemma~\ref{lemma:1} with $\hat{\epsilon} = N^{-\alpha - 1}$, $\sigma^2 =N^2\hat{\epsilon}^2\left(\tilde{\sigma}_a^2 + \tilde{\nu}_s^2\right)$ and $\theta = N^2\hat{\epsilon}^2$. For $\mathcal{T}_1$, we have 

For $\mathcal{T}_1$, we have 
    \begin{align*}
      \mathcal{T}_1 &\le \hat{\epsilon}^3\frac{\norm{f_h'''}}{6}\ex{\overline{A}_{\Sigma}^3 + \overline{S}_{\Sigma}^3 + 3\mu_{\Sigma}(\overline{A}_{\Sigma}^2 + \overline{S}_{\Sigma}^2)} + \hat{\epsilon}^2\frac{\norm{f_h''}}{2}\ex{\norms{\overline{\U}}_1^2} + \hat{\epsilon}^4 \frac{\norm{f_h''}}{2}\\
      &\le \frac{2\hat{\epsilon} }{3\left(N^2 \tilde{\sigma}_a^2 + N^2\tilde{\nu}_s^2\right)}\ex{\overline{A}_{\Sigma}^3 + \overline{S}_{\Sigma}^3 + 3\mu_{\Sigma}(\overline{A}_{\Sigma}^2 + \overline{S}_{\Sigma}^2)} + \frac{1}{2N^2}\ex{\norms{\overline{\U}}_1^2} + \frac{1}{2N^2}\hat{\epsilon}^2\\
      &\lep{a} O(N\hat{\epsilon} ) + \frac{1}{N}S_{max}\ex{\norms{\overline{\U}}_1}\\
      &\ep{b} O(N^{-\alpha}),
    \end{align*}
    where (a) holds since $A_{max} = \theta(N)$, $\mu_{\Sigma} = \theta(N)$ and $S_{max}$ is a constant independent of $N$; (b) is true since $\ex{\norms{\overline{\U}}_1} =\epsilon = N^{1-\alpha}$ and $\hat{\epsilon} = N^{-\alpha}$.

For $\mathcal{T}_2$, we have 
\begin{align*}
	\mathcal{T}_2 \le \frac{1}{N^2}\ex{\norms{\overline{\Q}(t+1)}_1\norms{\overline{\U}}_1} = O(g(N)).
\end{align*}
\end{proof}

\end{document}